\newcommand{\dbar}{\ensuremath{\overline\partial}}
\newcommand{\sumprime}{\if@display\sideset{}{'}\sum%
            \else\sum'\fi}
\newcommand{\ip}[2]{\ensuremath{\left\langle #1, #2\right\rangle}}
\newcommand{\rbrac}[1]{\ensuremath{  \left( #1 \right)  }}
\newcommand{\fr}[2]{\ensuremath{  \frac{#1}{#2}  }}
\newcommand{\bt}[1]{\textbf{#1}}
\newcommand{\id}[1]{\ensuremath{\bt{1}_{#1}}}
\newcommand{\ita}[1]{\emph{#1}}
\newcommand{\set}[1]{\ensuremath{\mathbb{#1}}}
\newcommand{\sub}{\ensuremath{\subseteq}}
\newcommand{\norm}[1]{\ensuremath{\left| \left |#1 \right|\right|}}
\begin{document}

\numberwithin{equation}{section}

\newtheorem{theorem}{Theorem}[section]
\newtheorem{proposition}[theorem]{Proposition}
\newtheorem{conjecture}[theorem]{Conjecture}
\def\theconjecture{\unskip}
\newtheorem{corollary}[theorem]{Corollary}
\newtheorem{lemma}[theorem]{Lemma}
\newtheorem{observation}[theorem]{Observation}
\newtheorem{definition}{Definition}
\numberwithin{definition}{section} 
\newtheorem{remark}{Remark}
\def\theremark{\unskip}
\newtheorem{kl}{Key Lemma}
\def\thekl{\unskip}
\newtheorem{question}{Question}
\def\thequestion{\unskip}
\newtheorem{example}{Example}
\def\theexample{\unskip}
\newtheorem{problem}{Problem}

%

\bigskip

\address{DEPARTMENT OF MATHEMATICAL SCIENCES, NORWEGIAN UNIVERSITY OF SCIENCE AND TECHNOLOGY, NO-7491 TRONDHEIM, NORWAY}
\email{tai.t.h.nguyen@ntnu.no}
\email{xu.wang@ntnu.no}

\title[On a remark of Ohsawa]{On a remark by Ohsawa related to the Berndtsson-Lempert method for $L^2$-holomorphic extension}

 \author{Tai Terje Huu Nguyen and Xu Wang}
\date{\today}

\begin{abstract} 
We utilize the Legendre-Fenchel transform and weak geodesics for plurisubharmonic functions to construct a weight function that can be used in the Berndtsson-Lempert method, to give an Ohsawa-Takegoshi extension type of result. Theorem 4.1 and Theorem 0.1 in \cite{OT2017} (Theorem \ref{Theorem A} and \ref{Theorem B} below) follow as two special cases of this result, thus answering affirmatively a question posed by Ohsawa in remark 4.1 in \cite{OT2017}, on the Berndtsson-Lempert method.
\end{abstract}

\maketitle

\tableofcontents

\section{Introduction}  

In \cite{OT2017} Ohsawa gave a new proof of two theorems of Guan-Zhou (see \cite{GZ0, YLZ, G} for further details and related results), Theorem  \ref{Theorem A} and \ref{Theorem B} below, and asked in a remark (remark 4.1 in \cite{OT2017}) whether a proof using the Berndtsson-Lempert method in \cite{BoLempOT} can be had.

\medskip

\begin{theorem}[Theorem 4.1 in \cite{OT2017}]\label{Theorem A}
{\it Let $\Omega$ be a pseudoconvex domain in $\set{C}^n$, $\Omega':=\{(z',z_n)\in \Omega:z_n=0\}$, $\phi$ a plurisubharmonic function on $\Omega$, $f\in \mathcal{O}(\Omega')$ a holomorphic function on $\Omega'$, and let $\alpha>0$. Then there exists $\tilde{f}\in \mathcal{O}(\Omega)$ a holomorphic function on $\Omega$ such that $\tilde{f}|_{\Omega'}=f$ and
\begin{align*}
\int_{\Omega}|\tilde{f}|^2e^{-\phi-\alpha|z_n|^2}\leq \fr{\pi}{\alpha}\int_{\Omega'}|f|^2e^{-\phi},
\end{align*}}where integration is with respect to the Lebesgue measure.
\end{theorem}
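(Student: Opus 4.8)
The plan is to run the Berndtsson--Lempert scheme with the simplest weak geodesic of weights, namely the one obtained by conjugating the model Gaussian weight by the $\mathbb{C}^{*}$--action $z_{n}\mapsto e^{-\tau}z_{n}$, and to read the estimate off from the concavity of the logarithmic minimal--extension norm along this geodesic. First I would reduce to the case where $\Omega$ is a smoothly bounded strictly pseudoconvex domain, $\phi\in\mathcal{C}^{\infty}(\overline{\Omega})$ is plurisubharmonic, and $f\in\mathcal{O}(U)$ for a neighbourhood $U$ of $\overline{\Omega'}$ with finite right--hand side; the general case then follows by exhaustion of $\Omega$, decreasing smooth approximation of $\phi$, Fatou, and a normal--families argument, all of which preserve the constant $\pi/\alpha$. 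One may also assume $f\not\equiv 0$ and that $\int_{\Omega'}|f|^{2}e^{-\phi}<\infty$.

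On the pseudoconvex total space $\mathbb{C}_{\tau}\times\Omega$ put $\Psi(\tau,z)=\phi(z)+\alpha|e^{-\tau}z_{n}|^{2}$; this is plurisubharmonic since $w\mapsto\alpha|w|^{2}$ is and $(\tau,z)\mapsto e^{-\tau}z_{n}$ is holomorphic, and it depends on $\tau$ only through $t:=\operatorname{Re}\tau$, with fibrewise weight $\psi_{t}(z_{n})=\alpha e^{-2t}|z_{n}|^{2}$. A one--line computation shows that the Legendre--Fenchel transform of $x\mapsto\psi_{t}(e^{x})$ equals $y\mapsto(\text{transform of }\psi_{0})(y)+ty$, i.e.\ it is affine in $t$; hence $(\psi_{t})_{t\in\mathbb{R}}$ is exactly the weak geodesic through $\psi_{0}=\alpha|z_{n}|^{2}$, running toward concentration on $\{z_{n}=0\}$ as $t\to-\infty$. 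For $t\in\mathbb{R}$ let
\[
N(t)=\inf\Bigl\{\textstyle\int_{\Omega}|u|^{2}e^{-\phi-\alpha e^{-2t}|z_{n}|^{2}}\ :\ u\in\mathcal{O}(\Omega),\ u|_{\Omega'}=f\Bigr\},
\]
which in the reduced setting satisfies $0<N(t)<\infty$ and is attained by a unique $u_{t}$. The positivity property at the heart of the Berndtsson--Lempert method --- that for a weight which is the restriction of a plurisubharmonic function on a pseudoconvex total space the function $\tau\mapsto\log N(\tau)$ is superharmonic --- then gives that $c(t):=\log N(t)$ is concave on $\mathbb{R}$.

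It remains to pin down $N(t)$ as $t\to-\infty$. Fixing any holomorphic extension $F_{0}\in\mathcal{O}(\Omega)$ of $f$ (which exists since $\Omega$ is Stein and $\Omega'$ is an analytic hypersurface) and rescaling $z_{n}=e^{t}\zeta$, dominated convergence together with the continuity of $\phi$ gives $e^{-2t}N(t)\le e^{-2t}\int_{\Omega}|F_{0}|^{2}e^{-\phi-\alpha e^{-2t}|z_{n}|^{2}}\to\frac{\pi}{\alpha}\int_{\Omega'}|f|^{2}e^{-\phi}$, using $\int_{\mathbb{C}}e^{-\alpha|\zeta|^{2}}=\pi/\alpha$. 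For the reverse inequality, apply the same rescaling to the minimisers: $\widetilde{u}_{t}(z',\zeta):=u_{t}(z',e^{t}\zeta)$ is holomorphic on the domains $\{(z',\zeta):(z',e^{t}\zeta)\in\Omega\}$, which eventually contain any compact subset of $\Omega'\times\mathbb{C}$, it satisfies $\widetilde{u}_{t}(z',0)=f(z')$ for $z'\in\Omega'$, and its squared norm for the weight $e^{-\phi(z',e^{t}\zeta)-\alpha|\zeta|^{2}}$ equals $e^{-2t}N(t)$, which is bounded by the first step. Extracting a locally uniform limit $U$ on $\Omega'\times\mathbb{C}$ along $t\to-\infty$, lower semicontinuity of the $L^{2}$--norm and optimality of the constant extension in the $\zeta$--variable yield $\frac{\pi}{\alpha}\int_{\Omega'}|f|^{2}e^{-\phi}\le\int_{\Omega'\times\mathbb{C}}|U|^{2}e^{-\phi(z',0)-\alpha|\zeta|^{2}}\le\liminf_{t\to-\infty}e^{-2t}N(t)$. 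Hence $c(t)=2t+\log\bigl(\frac{\pi}{\alpha}\int_{\Omega'}|f|^{2}e^{-\phi}\bigr)+o(1)$ as $t\to-\infty$, so $t\mapsto c(t)-2t$ is concave with a finite limit at $-\infty$ and is therefore non-increasing; evaluating at $t=0$ gives $N(0)\le\frac{\pi}{\alpha}\int_{\Omega'}|f|^{2}e^{-\phi}$. Taking $\widetilde{f}:=u_{0}$ and undoing the reductions proves the theorem.

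The main obstacle is the two--sided asymptotic analysis at $t\to-\infty$: the upper bound is a routine Laplace/rescaling estimate, but the matching lower bound must be extracted from the \emph{a priori} unknown minimisers $u_{t}$, and one must verify that the minimal extension cannot be asymptotically smaller than $\frac{\pi}{\alpha}e^{2t}\int_{\Omega'}|f|^{2}e^{-\phi}$; this is exactly what forces the asymptotic slope of the concave function $c$ to equal $2$ and fixes the sharp constant. (The superharmonicity of $\log N$ is the other indispensable input, but that is precisely the Berndtsson--Lempert positivity theorem invoked above.)
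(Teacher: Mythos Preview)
Your proof is correct and uses the very same weight the paper singles out: your $\alpha e^{-2t}|z_n|^{2}$ is $\sigma(\ln|z_n|-t)$ with $\sigma(x)=\alpha e^{2x}$, i.e.\ exactly the choice \eqref{eq:new1}/\eqref{eq:new2} by which the paper proves Theorem~\ref{main theorem} and Theorem~\ref{main main theorem} (Theorem~\ref{Theorem A} then follows from $\int_{\mathbb C}e^{-\alpha|w|^2}=\pi/\alpha$). The one genuine difference is that you argue on the \emph{primal} side with the concavity of $t\mapsto\log N(t)$, whereas the paper works on the \emph{dual} side with the log-convexity of $t\mapsto\|\xi_g\|_t^2$. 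This forces you to establish the two-sided asymptotic $e^{-2t}N(t)\to\tfrac{\pi}{\alpha}\int_{\Omega'}|f|^2e^{-\phi}$, in particular the lower bound via a normal-families argument on the rescaled minimisers, so as to pin down the asymptotic slope of the concave function. The dual formulation is more economical here: convexity together with mere boundedness \emph{above} of $t\mapsto\log\|\xi_g\|_t^2-2t$ at $-\infty$ (a one-sided trace estimate, immediate from the same change of variables $z_n=e^{t}w$) already gives monotonicity, after which the upper bound $e^{-2t}N(t)\le e^{-2t}\|F\|_t^2$ for any fixed extension $F$ suffices to read off the constant. So the paper's route avoids your compactness step, but your argument is sound, and the extra lower-bound computation has the pleasant side effect of exhibiting directly that $\pi/\alpha$ is sharp.
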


\begin{theorem} [Theorem 0.1 in \cite{OT2017}]\label{Theorem B} {\it With the same assumptions and notation as in Theorem \ref{Theorem A}, there exists $\tilde{f}\in \mathcal{O}(\Omega)$ such that $\tilde{f}|_{\Omega'}=f$ and satisfying the estimate
\begin{align*}
\int_{\Omega}\fr{|\tilde{f}|^2e^{-\phi}}{(1+|z_n|^2)^{1+\alpha}}\leq \fr{\pi}{\alpha}\int_{\Omega'}|f|^2e^{-\phi}.
\end{align*}}
\end{theorem}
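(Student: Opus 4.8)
The plan is to derive Theorem~\ref{Theorem B} as a special case of the general extension result that the paper sets up via the Berndtsson--Lempert method, rather than proving it from scratch. Recall that the Berndtsson--Lempert approach produces, for a suitable family of weights indexed by a parameter on a disc (or a half-plane), an extension $\tilde f$ whose $L^2$-norm is controlled by the behaviour of the weight near the boundary of the parameter domain; the optimal constant emerges from a one-dimensional computation involving the chosen weight. So the first step is to identify the correct ``test'' plurisubharmonic weight. For Theorem~\ref{Theorem A} the relevant weight is $\phi + \alpha|z_n|^2$, and the Gaussian $e^{-\alpha|z_n|^2}$ has the feature that $\frac{\alpha}{\pi}\int_{\C} e^{-\alpha|z_n|^2}\,d\lambda(z_n) = 1$, which is exactly where the factor $\pi/\alpha$ comes from. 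For Theorem~\ref{Theorem B} the analogous observation is that $\frac{\alpha}{\pi}\int_{\C} \frac{d\lambda(z_n)}{(1+|z_n|^2)^{1+\alpha}} = 1$ as well, so the weight to feed into the general machine is $\psi(z) = \phi(z) + (1+\alpha)\log(1+|z_n|^2)$, which is plurisubharmonic on $\Omega$ since $\log(1+|z_n|^2)$ is (it is the pullback of a Kähler potential, or one checks $\partial\bar\partial \log(1+|w|^2) \ge 0$ directly).

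The second step is to verify that this $\psi$ falls under the hypotheses of the general theorem constructed earlier in the paper. The key structural point the paper advertises is the use of the Legendre--Fenchel transform together with weak geodesics of plurisubharmonic functions to manufacture the one-parameter weight family along which the Berndtsson--Lempert iteration runs. Concretely, one writes $\log(1+|z_n|^2) = \sup_{t}\big( t\log|z_n|^2 - (\text{something}) \big)$-type formula, i.e., realises $(1+\alpha)\log(1+|z_n|^2)$ as built from the function $\alpha \log|z_n|^2$ (whose Berndtsson--Lempert analysis is essentially the classical one) via a Legendre transform in an auxiliary variable; the weak geodesic connecting the trivial weight to this one is what the general construction uses. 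So here I would simply check that the normalisation integral equals $1$ and that the curvature/convexity conditions required by the general statement are met by $\psi$, then quote that statement.

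The third step is bookkeeping: track the constant through the general result to confirm it outputs exactly $\pi/\alpha$ and not something larger. In the Berndtsson--Lempert scheme the final constant is $\big(\frac{\alpha}{\pi}\int_{\C} e^{-(\psi - \phi)|_{z'=\text{fixed}}}\,d\lambda(z_n)\big)^{-1}$ evaluated at the slice, which by the normalisation above is exactly $\pi/\alpha$, uniformly in $z'$; and one must check the restriction $\tilde f|_{\Omega'} = f$ is genuinely achieved, which is automatic from the construction since the weight $\psi$ has the mild (not $-\infty$) singularity $(1+\alpha)\log(1+|z_n|^2)$ along $z_n=0$ — in fact no singularity there at all, $\psi$ is just $\phi$ on $\Omega'$ — so the extension theorem applies directly with boundary data $f$ on the smooth hypersurface $\{z_n = 0\}$ and the usual $e^{-\phi}$ weight on $\Omega'$.

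I expect the main obstacle to be not the identification of the weight (that is the short, clever part) but making sure the general theorem's hypotheses are literally satisfied by $\psi(z) = \phi(z) + (1+\alpha)\log(1+|z_n|^2)$ on a possibly unbounded pseudoconvex $\Omega$ — in particular, the exhaustion/approximation arguments needed to reduce to bounded smooth strongly pseudoconvex domains and to smooth strictly plurisubharmonic weights, and then pass to the limit while keeping the constant $\pi/\alpha$ sharp. A secondary subtlety is that $\log(1+|z_n|^2)$ grows at infinity, so one must confirm the finiteness of the relevant $L^2$ quantities (or argue by a density/monotone-convergence argument from the case where $\Omega$ is replaced by $\Omega \cap \{|z_n| < R\}$), but since the right-hand side only involves $\Omega'$ and $e^{-\phi}$, and the integrand on the left decays like $|z_n|^{-2-2\alpha}$, this is routine once the general theorem is in place.
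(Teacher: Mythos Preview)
Your proposal is correct and matches the paper's approach: Theorem~\ref{Theorem B} is deduced from the general extension theorem (Theorem~\ref{main theorem}) by taking $\sigma(x)=(1+\alpha)\ln(1+e^{2x})$, which is convex and increasing, so that $e^{-\sigma(\ln|z_n|)}=(1+|z_n|^2)^{-(1+\alpha)}$ and $\mathrm{L}=\int_{\mathbb C}(1+|w|^2)^{-(1+\alpha)}=\pi/\alpha$. The exhaustion and finiteness worries you flag are not needed beyond what is already absorbed into the proof of Theorem~\ref{main theorem}.
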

In this paper we answer Ohsawa's question affirmatively by proving an extension theorem with estimate using the Berndtsson-Lempert method. This is Theorem \ref{main main theorem} below and the main theorem of this paper. A key ingredient in our proof is the construction of a weight function $\theta$ using the Legendre-Fenchel transform and weak geodesics for plurisubharmonic functions (see the appendix in Section \ref{Appendix}) to which we can apply the Berndtsson-Lempert technique.

\begin{theorem}[Main Theorem]\label{main main theorem}
{\it Let $\Omega$ be a pseudoconvex domain in $\set{C}^n$,  $1\leq k \leq n$ an integer,
$$
\Omega':=\{(z',z'')\in \Omega: z''=0\},  \ \ \ z'':=(z_{n-k+1}, \cdots, z_n),
$$
$\phi$ a plurisubharmonic function on $\Omega$, $f\in \mathcal{O}(\Omega')$ a holomorphic function on $\Omega'$, and $\sigma$ a convex increasing function on $\mathbb R^n$. Then there exists $\tilde{f}\in \mathcal{O}(\Omega)$ a holomorphic function on $\Omega$ such that $\tilde{f}|_{\Omega'}=f$ and 
\begin{align*}
\int_{\Omega}|\tilde{f}|^2e^{-\phi-\sigma(\ln|z''|)}\leq \int_{\Omega''} e^{-\sigma(\ln|z''|)} \int_{\Omega'}|f|^2e^{-\phi}, 
\end{align*}
where we write $\ln |z''|:=(\ln|z_{n-k+1}|, \cdots, \ln|z_n|)$, $\Omega'':=\{z''\in \mathbb C^k: \ln |z''| \in 
\Omega''_{\mathbb R}\}$, $ \Omega''_{\mathbb R}$ denotes the convex hull of $\{\ln |z''| : z\in \Omega\}$}, and integration is with respect to the Lebesgue measure.
\end{theorem}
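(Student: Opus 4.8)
The plan is to reduce the theorem to an application of the Berndtsson–Lempert method by constructing an appropriate weight function on $\Omega$, built from the data $\phi$, $\sigma$, and the geometry of $z''$.

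First I would set up the Berndtsson–Lempert framework: extension with estimate is governed by a family of weights $\{\psi_t\}_{t\in(-\infty,0)}$ (or indexed by $t\in\R$) on $\Omega$ such that, restricted to $\Omega'$, they give back $\phi$ plus a correction, the normalization at $t\to-\infty$ recovers the target weight $\phi+\sigma(\ln|z''|)$, and the map $t\mapsto\psi_t$ is a subgeodesic (curve of plurisubharmonic functions whose total space is plurisubharmonic on $\Omega\times S$ for a strip $S$), or better a weak geodesic. The content of the method is that for such a geodesic the minimal $L^2$ extension norm is controlled by $\liminf_{t\to-\infty} e^{-t}$ times the derivative data at $-\infty$, which one arranges to equal the right-hand side $\int_{\Omega''}e^{-\sigma(\ln|z''|)}\int_{\Omega'}|f|^2e^{-\phi}$.

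The key step — and the main obstacle — is constructing $\theta$: I would take the function $z''\mapsto\sigma(\ln|z''|)$, which is plurisubharmonic on $\Omega''$ precisely because $\sigma$ is convex and increasing and $\ln|z_j|$ is (pluri)subharmonic, and feed it through the Legendre–Fenchel transform in the ``radial'' variables $s=\ln|z''|\in\R^k$. Writing $u(s)=\sigma(s)$ (abusing notation for the relevant convex function on $\Omega''_\R$) and $u^*$ for its Legendre transform, the family $\psi_t(z)=\phi(z)+\langle t\text{-rescaled}\rangle$ should be defined so that, as a function of $t$, it is the inf-convolution/Legendre dual realizing the weak geodesic connecting the ``boundary weight'' at $t=0$ (which must restrict correctly on $\Omega'$) to the behavior forced at $t=-\infty$. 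The appendix results on weak geodesics for plurisubharmonic functions are exactly what guarantees that this $\psi_t$ is plurisubharmonic in $(z,t)$ jointly; the Legendre–Fenchel machinery is what makes the endpoint/normalization computations come out to the stated constant $\int_{\Omega''}e^{-\sigma(\ln|z''|)}$ rather than something larger. I expect the delicate points to be: (i) checking plurisubharmonicity of the total space near $z''=0$, where $\ln|z''|\to-\infty$, which requires the convex hull $\Omega''_\R$ and the monotonicity of $\sigma$ to control the transform; and (ii) verifying the curvature/positivity hypothesis of the Berndtsson–Lempert theorem precisely rather than up to a constant, since the sharpness of the constant is the whole point.

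Once $\theta$ is in hand, the remaining steps are comparatively routine: invoke the Berndtsson–Lempert extension theorem (as in \cite{BoLempOT}) with the weight family $\psi_t=\phi+\theta_t$ to produce $\tilde f\in\mathcal O(\Omega)$ with $\tilde f|_{\Omega'}=f$; compute the resulting bound, which is $\liminf$ of $e^{-t}$ times the $t$-derivative of the weighted integrals, and identify it — using the definition of the Legendre transform and Fubini over $\Omega''\times\Omega'$ — with $\int_{\Omega''}e^{-\sigma(\ln|z''|)}\int_{\Omega'}|f|^2e^{-\phi}$. Finally I would note that Theorem \ref{Theorem A} follows by taking $k=1$ and $\sigma(s)=\alpha e^{2s}+\text{const}$ chosen so that $e^{-\sigma(\ln|z_n|)}=e^{-\alpha|z_n|^2}$ with $\int_{\C}e^{-\alpha|z_n|^2}=\pi/\alpha$, and Theorem \ref{Theorem B} by taking $\sigma(s)=(1+\alpha)\ln(1+e^{2s})$ so that $e^{-\sigma(\ln|z_n|)}=(1+|z_n|^2)^{-(1+\alpha)}$ with $\int_\C(1+|z_n|^2)^{-(1+\alpha)}=\pi/\alpha$; in both cases one checks $\sigma$ is convex increasing, so the hypotheses of the Main Theorem are met.
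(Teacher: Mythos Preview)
Your overall framework---apply the Berndtsson--Lempert method to a well-chosen one-parameter family of plurisubharmonic weights---is exactly right, and your derivation of Theorems~\ref{Theorem A} and~\ref{Theorem B} as special cases is correct. But the paper's proof of the Main Theorem is much simpler than what you outline: it bypasses the Legendre--Fenchel machinery entirely and uses the translation family
\[
\theta(\tau,z)=\phi(z)+\sigma\bigl(\ln|z''|-\mathrm{Re}\,\tau\bigr),\qquad \ln|z''|-t:=(\ln|z_{n-k+1}|-t,\dots,\ln|z_n|-t).
\]
Since $\sigma$ is convex and increasing, $\theta$ is plurisubharmonic in $(\tau,z)$; the Berndtsson--Lempert convexity then gives that $t\mapsto\ln\|\xi_g\|_t^2-2kt$ is convex, and the change of variables $z''=e^{t}w$ shows it is bounded as $t\to-\infty$, hence increasing. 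The \emph{same} change of variables identifies $\liminf_{t\to-\infty}e^{2kt}\|F\|_t^2$ directly with $\int_{\Omega''}e^{-\sigma(\ln|w|)}\int_{\Omega'}|f|^2e^{-\phi}$, with no Legendre duality and no delicate endpoint analysis. (The paper does construct a genuine weak-geodesic weight via Legendre--Fenchel, namely $\psi^t(x)=t\,\sigma\bigl((x-c)/t+c\bigr)$, but only for the separate $k=1$ result Theorem~\ref{main theorem}; it then remarks that the translation weight is simpler and adopts it for the Main Theorem.)

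Beyond the unnecessary complexity, your proposal has gaps that would have to be closed to count as a proof: the weight family is never actually written down (``$\langle t\text{-rescaled}\rangle$'' is a placeholder, not a definition); the scaling factor in codimension $k$ is $e^{2kt}$, not $e^{-t}$; and the Berndtsson--Lempert bound is $\liminf_{t\to-\infty}e^{2kt}\|F\|_t^2$ for an arbitrary holomorphic extension $F$, not a ``$t$-derivative of the weighted integrals.'' The role you assign to $\Omega''_{\mathbb R}$ is also off: in the paper it enters only through the harmless normalization of setting $\sigma=\infty$ outside $\Omega''_{\mathbb R}$, not through any control of plurisubharmonicity near $z''=0$.
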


In the case that $\sigma$ depends only on $|z''|^2$, Theorem \ref{main main theorem} reduces to \cite[Theorem 1.7]{YLZ}. It is also very likely that the main theorem in \cite{GZ0} implies Theorem \ref{main main theorem}. So our contribution is not the theorem itself, but rather a method of proof using the Berndtsson-Lempert method.

\section{The Berndtsson-Lempert method in a simple setting}\label{BL method}
We start by recalling quickly the Berndtsson-Lempert method for $L^2$-holomorphic extension (\cite{BoLempOT}) in the simple setting of domains in $\set{C}^n$ as it will apply to us. Specifically, we consider the following set-up. Let $\Omega\sub \set{C}^n$ be a pseudoconvex domain and let $\phi$ be a plurisubharmonic function on $\Omega$. For $z\in \Omega$, we write $z=(z',z_n)$. Let $\Omega':=\{z\in \Omega:z_n=0\}$, suppose that $f\in \mathcal{O}(\Omega')$ is a given holomorphic function on $\Omega'$, and suppose that $|z_n|<1$ on $\Omega$. We are interested in finding a holomorphic extension $\tilde{f}\in \mathcal{O}(\Omega)$ of $f$ defined on all of $\Omega$ for which we have a good weighted $L^2$ estimate of the form (\cite{OT})
\begin{align*}
\int_{\Omega}|\tilde{f}|^2e^{-\phi}\leq C\int_{\Omega'}|f|^2e^{-\phi},
\end{align*}
where integration is with respect to the Lebesgue measure, and $C$ is some universal constant. The Berndtsson-Lempert method demonstrated in \cite{BoLempOT} is the following. Fix a positive constant $j>1$ and consider the following plurisubharmonic function 
\begin{align}\label{eq:bl0}
\theta: \mathbb C \times \Omega \to \mathbb R , \quad (\tau, z)\mapsto \theta(\tau,z):=\theta^{{\rm Re}\,\tau}(z):=\phi+ j \max\{\ln|z_n|^2-{\rm Re}\, \tau, 0\}.
\end{align}
For each $t:={\rm Re}\,\tau \leq 0$, $\theta^t$ provides a weight function for a weighted $L^2$ inner product $\ip{\cdot}{\cdot}_t$ and the corresponding induced weighted $L^2$ norm $\norm{\cdot}_t$, given by
\begin{align*}
\ip{u}{v}_{t}&:=\int_{\Omega}u\bar{v}e^{-\theta^{t}},\quad ||u||^2_t:= \ip{u}{u}_{t}.
\end{align*}
Since $|z_n|<1$ on $\Omega$, we have $\theta^{0}=\phi$. Let $f_{t}$ denote the holomorphic extension of $f$ with minimal $\norm{\cdot}_t$-norm. Then we can write  \cite[section 3]{BoLempOT}
\begin{align*}
\norm{f_t}^{2}_{t}&=\sup_{g\in C^{\infty}_{0}(\Omega')}\fr{|\xi_{g}(F)|^2}{\norm{\xi_g}^2_{t}},\quad \xi_g(F):=\int_{\Omega'} F\bar{g}e^{-\phi},
\end{align*}
where $F$ denotes \ita{any} holomorphic extension of $f$, and where $\norm{\xi_{g}}_{t}^2$ denotes the (squared) dual norm of the continuous linear functional $\xi_{g}$, given by 
\begin{align*}
\norm{\xi_{g}}_{t}^2&=\sup_{\norm{F}_{t}=1}|\xi_{g}(F)|^2.
\end{align*}
The crux of the argument is that by complex Brunn-Minkowski theory (see e.g. \cite{BoCBMT, BoCBMVectBundles, Bern06, Bern09}), under the assumption that $\theta$ is plurisubharmonic, we have that $t\mapsto \norm{\xi_g}_{t}^2$ is log-convex. Hence $t\mapsto e^{t}\norm{\xi_g}_{t}^2$ is log-convex. Suppose that this is also bounded as $t\to-\infty$. Then $t\mapsto t+\ln(\norm{\xi_g}_t^2)$ is convex and bounded as $t\to-\infty$, and hence increasing. Taking the exponential, we infer that $t\mapsto e^{t}\norm{\xi_g}_{t}^2$ is increasing, which implies that $t\mapsto e^{-t}\norm{f_t}_t^2$ is decreasing. Thus, we get the inequality
\begin{align*}
\norm{f_0}^2_{0}&\leq \lim_{t\to-\infty}e^{-t}\norm{f_t}^2_t.
\end{align*}Since $f_t$ is the extension of $f$ with \ita{minimal} $\norm{\cdot}_t$-norm, the right-hand side limit is bounded by the limit of $e^{-t}\norm{F}_{t}^2$ as $t\to-\infty$ with $F$ \ita{any} extension as above. Writing $f_0:=\tilde{f}$ and recalling that we are assuming $\theta^{0}=\phi$, we therefore find
\begin{align*}
\int_{\Omega}|\tilde{f}|^2e^{-\phi}&\leq \liminf_{t\to-\infty}e^{-t}\norm{F}_t^2=\liminf_{t\to-\infty}e^{-t}\int_{\Omega}|F|^2e^{-\theta^{t}}.
\end{align*} 
By \cite[Lemma 3.2, Lemma 3.3]{BoLempOT}, as $j\to \infty$, the right-hand side limit converges to $\pi\int_{\Omega'}|f|^2e^{-\phi}$. Thus, we get an extension $\tilde{f}$ with the following estimate:
\begin{align*}
\int_{\Omega}|\tilde{f}|^2e^{-\phi}&\leq \pi\int_{\Omega'}|f|^2e^{-\phi}.
\end{align*}
This estimate is known to be optimal (we have equality in the case $\Omega=\Omega' \times \{|z_n|<1\}$ and $\phi$ does not depend on $z_n$) and was first proved in \cite{Bl0, GZ0}.

\section{A solution to Ohsawa's question}

In this section we apply the Berndtsson-Lempert method in Section \ref{BL method} to a weight function $\theta$ that is different from the one in \eqref{eq:bl0} to prove Theorem \ref{main theorem} below.
Notice that taking
$\sigma=\alpha e^{2x}$ or $\sigma=(1+\alpha)\ln(1+e^{2x})$ we get $\mathrm{L}=\pi/\alpha$ in Theorem \ref{main theorem}. Hence Theorem \ref{Theorem A} and \ref{Theorem B} follow.  Our construction of $\theta$ is based on a Legendre-Fenchel transform approach to weak geodesics for plurisubharmonic functions (see the appendix in Section \ref{Appendix}).

\begin{theorem}\label{main theorem}
Let $\Omega\sub \set{C}^n$ be pseudoconvex and write for $z\in \Omega$, $z=(z',z_n)$. Let $\phi$ be a plurisubharmonic function in $\Omega$, $\Omega':=\{z\in \Omega:z_n=0\}$, and  $f\in \mathcal{O}(\Omega')$ a holomorphic function on $\Omega'$. Put $x:=\ln|z_n|$ and let $\sigma=\sigma(x)$ be a 
convex and increasing function in $x$ with the property that $\mathrm{L}:=\displaystyle \int_{\mathbb C} e^{-\sigma(\ln |w|)} <\infty$. Then there exists $\tilde{f}\in \mathcal{O}(\Omega)$ a holomorphic function on $\Omega$ such that $\tilde{f}|_{\Omega'}=f$ and
\begin{align}
\int_{\Omega}|\tilde{f}|^2e^{-\phi-\sigma(\ln|z_n|)}\leq \mathrm{L}\int_{\Omega'}|f|^2e^{-\phi}.\label{final estimate}
\end{align}
\end{theorem}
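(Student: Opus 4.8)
The plan is to mimic the Berndtsson-Lempert argument recalled in Section \ref{BL method}, but with the single-max weight $\phi + j\max\{\ln|z_n|^2 - \mathrm{Re}\,\tau, 0\}$ replaced by a weight $\theta$ built from $\sigma$ via the Legendre-Fenchel transform. Concretely, write $\psi(x) := \sigma(x)$ and let $\psi^*(p) := \sup_{x}(px - \psi(x))$ be its Legendre transform in the variable $x = \ln|z_n|$. The idea is that the family $t \mapsto \theta^t$ should be a weak geodesic (in the sense of the appendix) in the space of plurisubharmonic weights, interpolating between $\theta^0 = \phi$ (using $|z_n|<1$, or after the harmless normalization $\sup_\Omega |z_n| < 1$) and, as $t \to -\infty$, the ``straight line'' weight $\phi + \sigma(\ln|z_n|)$ that appears in the target estimate \eqref{final estimate}. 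The natural candidate, suggested by the $\sigma = \alpha e^{2x}$ and $\sigma = (1+\alpha)\ln(1+e^{2x})$ cases, is something like
\begin{align*}
\theta(\tau, z) = \theta^t(z) := \phi(z) + \sup_{p \ge 0}\bigl(p(\ln|z_n|^2 - 2t) - \psi^*(p)\bigr)
\end{align*}
where $t = \mathrm{Re}\,\tau$, i.e.\ a sup over affine functions of $\mathrm{Re}\,\tau$ with psh ``intercepts'' $p\ln|z_n|^2 - \psi^*(p)$ — a supremum of psh functions in $(\tau,z)$, which after upper-semicontinuous regularization is psh. The two special weights in \eqref{eq:bl0} correspond to the degenerate Legendre pair $\psi^* = $ indicator-type function, but for general convex increasing $\sigma$ we get a genuinely curved family.

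The steps, in order, would be: (1) \emph{Construct $\theta$ and verify plurisubharmonicity.} Show the displayed $\theta$ (or its u.s.c.\ regularization) is psh on $\mathbb C \times \Omega$, using that $\phi$ is psh, $\ln|z_n|^2$ is psh, $p \ge 0$, and that a supremum of psh functions whose intercepts are psh is psh; here is where the appendix's Legendre/weak-geodesic material does the work, identifying $t \mapsto \theta^t$ as the weak geodesic with the prescribed endpoint data. (2) \emph{Endpoint identification.} Check $\theta^0 = \phi$ (needs $|z_n| \le 1$, arranged by rescaling $\Omega$; the estimate is scale-invariant in the right way) and compute $\lim_{t\to-\infty} \theta^t(z) = \phi(z) + \sigma(\ln|z_n|)$, by Fenchel-Moreau ($\psi^{**} = \psi$ since $\sigma$ is convex lsc): as $t \to -\infty$ the term $-2pt \to +\infty$ for $p>0$, so the sup recovers $\sup_p(p\ln|z_n|^2 - \psi^*(p)) = \psi(\ln|z_n|^2\text{-ish})$ — the precise bookkeeping of the factor of $2$ between $\ln|z_n|$ and $\ln|z_n|^2$ has to be tracked, and this is exactly where one must be careful. (3) \emph{Run the Berndtsson-Lempert machine.} With $\theta$ psh, complex Brunn-Minkowski gives log-convexity of $t \mapsto \|\xi_g\|_t^2$; one then checks the boundedness as $t \to -\infty$ of the relevant normalized quantity so that the monotonicity argument of Section \ref{BL method} applies verbatim, yielding $\int_\Omega |\tilde f|^2 e^{-\theta^0} \le \liminf_{t\to-\infty} e^{-ct}\int_\Omega |F|^2 e^{-\theta^t}$ for the appropriate normalization constant $c$ and any extension $F$. (4) \emph{Compute the limiting constant.} Choose the test extension $F(z', z_n) = f(z')$ (the trivial extension) on $\Omega' \times \{|z_n| < r\} \subseteq \Omega$ after shrinking, or work with a product neighbourhood, and evaluate $\liminf_{t\to-\infty} e^{-ct}\int_\Omega |F|^2 e^{-\theta^t}$; the $z_n$-integral should, in the limit, produce exactly $\int_{\mathbb C} e^{-\sigma(\ln|w|)} = \mathrm L$ (in polar coordinates $w = \rho e^{i\theta}$, this is $2\pi \int_0^\infty e^{-\sigma(\ln\rho)}\rho\,d\rho$), mirroring how \cite[Lemma 3.2, Lemma 3.3]{BoLempOT} extract the factor $\pi$ in the classical case.

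The main obstacle I anticipate is step (1)–(2): writing down the \emph{correct} weight $\theta$ and proving it is both plurisubharmonic and has the two required endpoint limits. Getting a supremum-of-psh-functions expression that is psh is easy; getting one whose $t\to-\infty$ limit is exactly $\phi + \sigma(\ln|z_n|)$ and whose $t=0$ value is exactly $\phi$, \emph{simultaneously}, is the delicate part — this is precisely the content of realizing $t \mapsto \theta^t$ as a weak geodesic with prescribed endpoints, and it is why the Legendre-Fenchel transform (which converts the convexity of $\sigma$ into the required psh structure) is the key device. A secondary technical point is the convergence/finiteness argument needed to justify the monotonicity passage to the limit and to interchange limit and integral in step (4), where $\mathrm L < \infty$ is used; and one must handle the reduction $\sup_\Omega |z_n| < 1$ (not automatic, but achievable by scaling and a limiting/exhaustion argument) so that $\theta^0 = \phi$ holds.
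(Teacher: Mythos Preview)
Your displayed weight collapses. Since $\sigma$ is convex, increasing and lower semicontinuous, $\psi^*(p)=+\infty$ for $p<0$ and Fenchel--Moreau gives
\[
\sup_{p\ge 0}\bigl(p(\ln|z_n|^2-2t)-\psi^*(p)\bigr)=\psi^{**}(\ln|z_n|^2-2t)=\sigma\bigl(2(\ln|z_n|-t)\bigr),
\]
so your $\theta^t$ is simply $\phi+\sigma$ evaluated at a translated argument. In particular your endpoint claims are both false: $\theta^0=\phi+\sigma(2\ln|z_n|)\neq\phi$ even when $|z_n|<1$ (for instance $\sigma(x)=\alpha e^{2x}$ is strictly positive everywhere), and as $t\to-\infty$ one has $\sigma(2\ln|z_n|-2t)\to+\infty$, not $\sigma(\ln|z_n|)$. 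More fundamentally, you have the roles of the endpoints reversed: the left-hand side of \eqref{final estimate} carries the weight $e^{-\phi-\sigma(\ln|z_n|)}$, so the minimal extension $\tilde f$ must be taken at a time where $\theta=\phi+\sigma(\ln|z_n|)$, not where $\theta=\phi$.

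The paper's proof does not try to make one endpoint equal to $\phi$. It takes the weak geodesic $\psi^t(x)=t\,\sigma\bigl((x-c)/t+c\bigr)$ on $t\in(0,1]$, which joins $\sigma^1=\sigma$ at $t=1$ to the indicator $\sigma^0=\mathbf 1_{(-\infty,c]}$ at $t=0$; the Berndtsson--Lempert convexity is then applied to $\rho_c(t)=\ln\|\xi_g\|_t^2-2c(t-1)$ on $(0,1]$, with the extra parameter $c$ sent to $-\infty$ in a coupled way ($c=-1/(1-t)^2$, $t\uparrow 1$) so that the boundary contribution at $t=1/2$ drops out (Lemma~\ref{le:1}). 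The constant $\mathrm L$ then falls out of the change of variables $z_n=e^{1/(t-1)}w$. A simpler variant, also in the paper, is exactly the translation weight your formula secretly produces: take $\psi^t(x)=\sigma(x-t)$, so that $\theta^0=\phi+\sigma(\ln|z_n|)$; then $t\mapsto\ln\|\xi_g\|_t^2-2t$ is convex and, via the substitution $z_n=e^t w$, bounded as $t\to-\infty$, hence increasing, and the same change of variables computes the limiting constant as $\mathrm L$. Either way, no normalisation $|z_n|<1$ is needed and the ``$\theta^0=\phi$'' step does not occur.
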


\begin{proof}  Fix $c\in \mathbb R$ and consider the weight function $\theta$ defined by
$$
\theta(\tau, z):= \theta^{{\rm Re}\,\tau}(z) := \phi(z)+ \psi^{{\rm Re}\,\tau}(\ln|z_n|), \ \ \  t={\rm Re}\,\tau >0,
$$
where
\begin{align}
\psi^t(x):=\psi(t,x)&:=t\sigma\rbrac{\fr{x-c}{t}+c}.\label{4}
\end{align}
One may directly verify that (see also the appendix in Section \ref{Appendix} for another approach) $\psi$ is convex in $(t,x)$ and increasing with respect to $x$. Since $\phi$ is assumed to be plurisubharmnic, we know that $\theta$ defined above is plurisubharmonic in $(\tau, z)$. By the Berndtsson-Lempert method, using the same notation as in Section \ref{BL method}, it follows that
$$
\rho_c(t):=\ln ||\xi_g||^2_t-2c(t-1)
$$ 
is convex as a function of $t$. Hence for all $t\in [1/2, 1)$, 
$$
\frac{\rho_c(1)-\rho_c(t)}{1-t} \geq  \frac{\rho_c(1)-\rho_c(1/2)}{1-1/2}.
$$
Observe that $\rho_c(1)= \ln ||\xi_g||^2_1$ does not depend on $c$. Thus,
$$
\ln ||\xi_g||^2_1 \geq \rho_c(t) + 2(1-t) \left(\ln ||\xi_g||^2_1-\rho_c(1/2) \right), 
$$ for all $t\in [1/2,1)$ \ita{and} for all $c\in \set{R}$.
Now let
$$
c:= \frac{-1}{(1-t)^2},
$$
and let $t \uparrow 1$. Lemma \ref{le:1} below implies that
\begin{align}
\ln ||\xi_g||^2_1 \geq  \limsup_{t\uparrow 1}  \rho_c(t).\label{convex ineq}
\end{align}
Hence, by \eqref{convex ineq}, we have a holomorphic extension $\tilde f$ with the following estimate
\begin{align*}
\int_{\Omega}|\tilde{f}|^2e^{-\phi-\sigma(\ln|z_n|)}&\leq  \liminf_{t \uparrow 1} e^{2c(t-1)}  \int_{\Omega}  |F|^2 e^{-\phi-\psi^t(\ln|z_n|)}  \\
&= \liminf_{t \uparrow 1}  e^{2/(1-t)}  \int_{\Omega}|F|^2  e^{-\phi-t \sigma\left( \frac{\ln|z_n|+\frac1{1-t}}{t}\right)} ,
\end{align*}
where $F$ is any arbitrary holomorphic extension. From the change of variables
$$
z_n= e^{\frac{1}{t-1}} w, \ \ \Omega_t:= \{(z', w) \in \mathbb C^n: (z', e^{\frac{1}{t-1}} w) \in \Omega\}, 
$$
and the definition of $\mathrm{L}$, we find that letting $t\uparrow 1$, 
$$
e^{2/(1-t)}  \int_\Omega |F|^2  e^{-\phi-t \sigma\left( \frac{\ln|z_n|+\frac1{1-t}}{t}\right)} = \int_{\Omega_t} |F(z', e^{\frac{1}{t-1}} w)|^2 e^{-\phi\left(z', e^{\frac{1}{t-1}} w\right) - t \sigma \left(\frac{\ln |w|}{ t}\right)}
$$
converges to the right hand side of \eqref{final estimate}. Hence the theorem follows.
\end{proof}

\begin{lemma}\label{le:1} Let $c:= \frac{-1}{(1-t)^2}$. Then $
\lim_{t \uparrow 1} (1-t) \rho_c(1/2) =0$.
\end{lemma}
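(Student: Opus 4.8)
The plan is to evaluate $\rho_c(1/2)$ explicitly and, after a dilation in the $z_n$ variable, to recognize it as the logarithm of a dual norm $N_s$ on a rescaled domain which stays squeezed between two positive constants as $t\uparrow 1$; then $(1-t)\rho_c(1/2)=(1-t)\ln N_s\to 0$. Write $s:=1-t$, so $c=-s^{-2}$. From \eqref{4}, $\psi^{1/2}(x)=\tfrac12\sigma\rbrac{2(x-c)+c}=\tfrac12\sigma\rbrac{2x+s^{-2}}$, so the weight attached to the level $1/2$ is $\phi(z)+\tfrac12\sigma\rbrac{2\ln|z_n|+s^{-2}}$. First I would perform the change of variables $z_n=e^{-s^{-2}/2}w$, which turns $2\ln|z_n|+s^{-2}$ into $2\ln|w|$, contributes a Jacobian factor $e^{-s^{-2}}$, and leaves $\xi_g$ unchanged since $\xi_g$ only sees $\{z_n=0\}$. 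This gives $\norm{\xi_g}_{1/2}^{2}=e^{s^{-2}}N_s$, where
\[
N_s:=\sup_{G}\ \frac{\abs{\int_{\Omega'}G(z',0)\,\overline{g(z')}\,e^{-\phi(z',0)}}^{2}}{\int_{\widehat\Omega_s}|G|^{2}\,e^{-\phi(z',e^{-s^{-2}/2}w)-\frac12\sigma(2\ln|w|)}}\,,\qquad \widehat\Omega_s:=\bigl\{(z',w):(z',e^{-s^{-2}/2}w)\in\Omega\bigr\},
\]
the supremum running over holomorphic $G$ on $\widehat\Omega_s$ with finite denominator. Since $\rho_c(1/2)=\ln\norm{\xi_g}_{1/2}^{2}-2c(\tfrac12-1)=\ln\norm{\xi_g}_{1/2}^{2}+c=\ln N_s$, the lemma reduces to showing $0<\liminf_{s\to 0}N_s\le\limsup_{s\to 0}N_s<\infty$ (and we may assume $\xi_g\not\equiv 0$, since otherwise $g$ plays no role in the Berndtsson--Lempert supremum).

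For the upper bound I would compare $N_s$ with the analogous quantity $N_0$ built from $\Omega'\times\C$ and the \emph{split} weight $\phi(z',0)+\tfrac12\sigma(2\ln|w|)$. Here $\int_{\C}e^{-\frac12\sigma(2\ln|w|)}\,d\lambda(w)=\pi\int_{\R}e^{u-\frac12\sigma(u)}\,du<\infty$: $\mathrm L<\infty$ forces $\sigma(u)-2u\to+\infty$ at both ends, and convexity and monotonicity of $\sigma$ then make $u-\tfrac12\sigma(u)\to-\infty$ at least linearly at both ends. The fibrewise sub-mean value inequality in $w$ (with the rotation-invariant measure $e^{-\frac12\sigma(2\ln|w|)}\,d\lambda(w)$) gives the trace estimate $\int_{\Omega'}|G(z',0)|^{2}e^{-\phi(z',0)}\le\rbrac{\int_{\C}e^{-\frac12\sigma(2\ln|w|)}}^{-1}\int_{\Omega'\times\C}|G|^{2}e^{-\phi(z',0)-\frac12\sigma(2\ln|w|)}$, hence $N_0<\infty$ by Cauchy--Schwarz and $N_0>0$. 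To push this to $\limsup_s N_s\le N_0$ I would take $s_j\downarrow 0$ realising the limsup, pick $G_j$ on $\widehat\Omega_{s_j}$ with unit denominator and numerator tending to $\limsup_s N_s$, observe that $\widehat\Omega_{s_j}$ eventually contains any fixed compact subset of $\Omega'\times\C$ and that the weights there are locally bounded below (uniformly in $j$, as $\phi$ is locally bounded above near $\Omega'\times\{0\}$), extract a locally uniformly convergent subsequence $G_j\to G_*$ by Montel's theorem, and pass to the limit: the numerators converge while Fatou together with the upper semicontinuity $\limsup_j\phi(z',e^{-s_j^{-2}/2}w)\le\phi(z',0)$ gives $\int_{\Omega'\times\C}|G_*|^{2}e^{-\phi(z',0)-\frac12\sigma(2\ln|w|)}\le 1$, so $N_0\ge\limsup_s N_s$. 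For the lower bound I would use $\norm{\xi_g}_{1/2}^{2}\ge\abs{\xi_g(H)}^{2}/\norm{H}_{1/2}^{2}$ (valid for any $H\in\mathcal O(\Omega)$ of finite $\norm{\cdot}_{1/2}$ norm, from the duality in Section \ref{BL method}) with $H$ the Ohsawa--Takegoshi extension --- in the sharp form, e.g.\ \cite{YLZ} --- of a holomorphic $h\in L^{2}(\Omega',e^{-\phi})$ chosen with $\int_{\Omega'}h\,\overline g\,e^{-\phi}\neq 0$ (possible precisely because $\xi_g\not\equiv 0$): the sharp constant for the weight $\phi+\tfrac12\sigma(2\ln|z_n|+s^{-2})$ being $\int_{\C}e^{-\frac12\sigma(2\ln|z_n|+s^{-2})}=e^{-s^{-2}}\int_{\C}e^{-\frac12\sigma(2\ln|w|)}$, one gets $\norm{H}_{1/2}^{2}\le e^{-s^{-2}}\rbrac{\int_{\C}e^{-\frac12\sigma(2\ln|w|)}}\int_{\Omega'}|h|^{2}e^{-\phi}$ and $\xi_g(H)=\int_{\Omega'}h\,\overline g\,e^{-\phi}$, whence $N_s=e^{-s^{-2}}\norm{\xi_g}_{1/2}^{2}\ge\abs{\int_{\Omega'}h\,\overline g\,e^{-\phi}}^{2}\big/\rbrac{\int_{\C}e^{-\frac12\sigma(2\ln|w|)}\int_{\Omega'}|h|^{2}e^{-\phi}}>0$, independent of $s$. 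Combining, $\ln N_s$ is bounded and $(1-t)\rho_c(1/2)=s\ln N_s\to 0$.

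The hard part is not the scaling identity or the Ohsawa--Takegoshi input, but the measure-theoretic care in the normal-families limit when $\phi$ is a genuine plurisubharmonic function (so $e^{-\phi}$ need not be locally integrable and $\phi$ may be $-\infty$ on a polar set) and $\Omega$ is unbounded --- in particular the semicontinuity of the denominators under the varying domains $\widehat\Omega_s$ and the convergence of the numerators. I would dispose of this by the usual reductions: assume $\int_{\Omega'}|f|^{2}e^{-\phi}<\infty$ (otherwise \eqref{final estimate} is vacuous), restrict to test functions $g$ with $\int_{\supp{g}}|g|^{2}e^{-\phi}<\infty$ (the others contribute nothing to the Berndtsson--Lempert supremum), exhaust $\Omega$ by relatively compact pseudoconvex subdomains, and regularise $\phi$ by a decreasing sequence of smooth plurisubharmonic functions, checking that $N_s$ and $\int_{\C}e^{-\frac12\sigma(2\ln|w|)}$ pass to the limit. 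One should also record that, although $\sigma$ need not be bounded below, $e^{-\frac12\sigma(2\ln|z_n|+s^{-2})}$ is integrable near $z_n=0$ --- by the same computation that yields $\int_{\C}e^{-\frac12\sigma(2\ln|w|)}<\infty$ --- so the change of variables introduces no singularity along $\{z_n=0\}$.
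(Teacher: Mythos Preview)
Your computation is correct: the change of variables $z_n=e^{c/2}w$ (equivalently $z_n=e^{-s^{-2}/2}w$) yields exactly $\rho_c(1/2)=\ln N_s$, and your two--sided bound on $N_s$ gives the limit. This is, however, considerably more than the paper does, and one of your two halves is philosophically awkward.

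The paper's proof consists of a single observation: for the inequality \eqref{convex ineq} one only needs $\limsup_{t\uparrow 1}(1-t)\rho_c(1/2)\le 0$, and for that it suffices that $\limsup_{c\to-\infty}\rho_c(1/2)<\infty$. After the very same change of variables you perform, this upper bound follows from a direct trace/sub-mean-value estimate (what you call the comparison with $N_0$) and the finiteness of $\int_{\mathbb C}e^{-\frac12\sigma(2\ln|w|)}$, without any normal--families extraction; the paper compresses this into the phrase ``the change of variables $z_n=e^{c/2}w$ and the definition of $\mathrm L$.'' Your Montel/Fatou argument is a perfectly valid way to obtain the same upper bound, just heavier machinery for the same conclusion.

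Your lower bound, on the other hand, is unnecessary for the application and sits uneasily here: you invoke the sharp Ohsawa--Takegoshi theorem (from \cite{YLZ}) to produce the test function $H$, but the entire purpose of the surrounding argument is to \emph{derive} such Ohsawa--Takegoshi estimates via the Berndtsson--Lempert method. Strictly speaking this is not circular---the result you cite is established independently---but it does undercut the paper's goal. The paper sidesteps the issue by noting that only the upper bound matters. If you want a self-contained lower bound (so that the lemma holds as stated rather than merely in the form needed), you can get it without Ohsawa--Takegoshi: take any fixed $H\in\mathcal O(\Omega)$, independent of~$z_n$ on a product neighbourhood of $\mathrm{supp}\,g$, with $\xi_g(H)\neq 0$; the same change of variables bounds $e^{-s^{-2}}\|H\|_{1/2}^2$ above uniformly in $s$, hence $N_s$ from below.
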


\begin{proof} It suffices to show that $\limsup_{c\to -\infty} \rho_c(1/2) <\infty$. That is, 
$$
 \limsup_{c\to 
 \infty}  \sup_{F \in \mathcal O(\Omega)}   \frac{\left|\displaystyle\int_{\Omega'} F\bar{g}e^{-\phi}\right|^2}{e^{-c}\displaystyle\int_{\Omega} |F|^2  e^{-\phi-\frac12 \sigma\left( 2\ln|z_n|-c\right)} } <\infty.
$$
This now follows from the change of variables $z_n= e^{c/2} w$ and the definition of $\mathrm{L}$.
\end{proof}

\textbf{Remark}: The change of variables argument in the proof of Theorem \ref{main theorem} also suggests to use another weight function, with
\begin{equation}\label{eq:new1}
\psi^t(x)=\sigma(x-t), \ \  t \in \mathbb R.
\end{equation}
In fact, as we shall see next, the proof of  Theorem \ref{main theorem} is simpler if we use this new weight function. Nevertheless, we still wish to include the proof given above since it is our first example of the use of weak geodesics in the Berndtsson-Lempert method. 

\subsection[Proof of the main theorem]{Proof of the main theorem (Theorem \ref{main main theorem})}
\begin{proof}[\ita{Since $\Omega\subset \mathbb C^{n-k} \times \Omega''$, one may assume that $\sigma=\infty$ outside $\Omega''_{\mathbb R}$}] Also we may assume that the right-hand side in the estimate in the theorem is finite, else there is nothing to prove.
Replace $\psi^t$ in \eqref{4} by
\begin{equation}\label{eq:new2}
\psi^t(x)=\sigma(x-t), \ \  t \in \mathbb R, \ \  x-t:=(x_1-t, \cdots, x_k-t).
\end{equation}
It is clear that $\psi^t(x)$ in convex in $(t,x)$ and increasing in $x$. Hence, the corresponding weight function 
$$
\theta(\tau, z)= \phi(z)+ \psi^{{\rm Re}\,\tau}(\ln|z_n|)
$$
is plurisubharmonic in $(\tau, z)$, and the Berndtsson-Lempert method applies. Thus, we know that $t\mapsto \ln \norm{\xi_g}_t^2-2kt$ is convex. Moreover, the change of variables $z''= e^t w$ gives, 
\begin{align}\label{eq:rotation}
e^{2kt} \int_{\Omega}  |F|^2 e^{-\phi-\sigma(\ln|z''|-t)} & =   \int_{\{(z', e^t w) \in \Omega\}} |F(z', e^t w)|^2 e^{-\phi(z', e^t w)-\sigma(\ln|w|)}, 
\end{align} 
where $F$, as before, is any arbitrary fixed extension of $f$. From \eqref{eq:rotation} we infer that $\ln \norm{\xi_g}_t^2-2kt$ is also bounded near $t=-\infty$. Hence, $t\mapsto \ln \norm{\xi_g}_t^2-2kt$ is increasing and there exists a holomorphic extension $\tilde f$ with 
\begin{align*}
\int_{\Omega}|\tilde{f}|^2e^{-\phi-\sigma(\ln|z''|)} & \leq 
 \liminf_{t\to -\infty}   \int_{\{(z', e^t w) \in \Omega\}} |F(z', e^t w)|^2 e^{-\phi(z', e^t w)-\sigma(\ln|w|)}   \\
  &  \leq \int_{\mathbb C^k} e^{-\sigma(\ln|w|)} \int_{\Omega'}|f|^2e^{-\phi}  \\
   \ & = \int_{\Omega''} e^{-\sigma(\ln|w|)} \int_{\Omega'}|f|^2e^{-\phi},
\end{align*} 
where in the last equality we have used that \text{$\sigma=\infty$ outside $\Omega''_{\mathbb R}$}.
This completes the proof. \end{proof}

\medskip

\textbf{Remark}: From \eqref{eq:rotation}, in the case that $\Omega$ is horizontally balanced (that is, $(z', z'')\in \Omega \Rightarrow (z', \tau z'') \in \Omega$ for all $\tau \in \mathbb C$ with $|\tau|<1$), the method above is equivalent to the one associated to the weight function
$$
\theta(\tau, z):=\phi(z', \tau z'')+\sigma(\ln |z''|).
$$
This construction has already been used in \cite[Section 2]{Bern20}. The advantage of \eqref{eq:new2} is that it also applies to general $\Omega$. 

\section{Appendix: weak geodesics in the space of toric plurisubharmonic functions}\label{Appendix}

By a toric plurisubharmonic function we mean a function of the following form
$$
\bm \psi(z):=\psi( \log|z_1|, 
\cdots, \log|z_n|)
$$
on $\mathbb C^n$, where $\psi$ is a convex increasing function on $\mathbb R^n$.  

\begin{definition} We call a family, say $\{\bm \psi^t\}_{0<t<1}$, of toric plurisubharmonic functions a weak geodesic \footnote{For usual weak psh geodesics on compact K\"ahler manfiolds, see \cite[Section 2]{BB}. The weak geodesic is also called generalized geodesic in \cite[Section 2.2]{Bern15}, and maximal psh segment or psh geodesic segment in \cite[page 5]{R}. See \cite{M} for the background.} if 
$$
\bm \Psi : (\tau ,z) \mapsto \bm\psi^{{\rm Re}\,\tau}(z), \ \ \tau\in \mathbb C_{(0,1)}:=\{\tau\in\mathbb C: 0<{\rm Re}\,\tau<1\} 
$$
is plurisubharmonic on $\mathbb C_{(0,1)} \times \mathbb C^n$ and $(i\partial\dbar \bm \Psi)^{n+1}=0$ on $\mathbb C_{(0,1)} \times (\mathbb C\setminus\{0\})^n$.
\end{definition}

\textbf{Remark}: Noticing that $\bm\Psi$ is locally bounded on $\mathbb C_{(0,1)} \times (\mathbb C\setminus\{0\})^n$, we know that $(i\partial\dbar \bm \Psi)^{n+1}$ is well defined. Moreover, by a local change of variables $z_j=e^{w_j}$, we know that $\{\bm \psi^t\}_{0<t<1}$ is a weak geodesic if and only if
$$
\psi(t,x):=\psi^t(x)
$$
is convex on $(0,1) \times \mathbb R^n$ and
$$
MA(\psi):=\det (D^2_{(t,x)} \psi) =0 
$$
on  $(0,1) \times \mathbb R^n$, where $D^2_{(t,x)} \psi$ denotes the real Hessian matrix of $\psi$ with respect to $(t,x)$. Hence, by Corollary 2.5 in \cite{BoConvex}, $\{\bm \psi^t\}_{0<t<1}$ is a weak geodesic if and only if the Legendre-Fenchel transform of $\psi(t,x)$ with respect to $x$ for fixed $t$, say 
$$
(\psi^{t})^{*}(\xi):= \sup_{x\in \mathbb R^n} x\cdot \xi -\psi^t(x),
$$
is an affine function in $t$. This suggest the following definition.

\begin{definition} We call a family, say $\{\bm \psi^t\}_{0 < t < 1}$, of toric plurisubharmonic functions, a weak geodesic segment if 
$$
(\psi^t)^*= t (\sigma^1)^*+(1-t) (\sigma^0)^* 
$$
with $\sigma^1, \sigma^0$ convex increasing functions on $\mathbb R^n$. 
\end{definition}

Now let us consider the case $n=1$. Let  $\sigma^1:=\sigma$ in Theorem \ref{main theorem}. For $\sigma^0$ we take
\begin{align*}
\sigma^0:=\id{(-\infty, c]}(x)=\left\{\begin{array}{ccc}
0,&x \leq c\\
\infty,&x > c.
\end{array}\right.
\end{align*}
Then we have
$$
 (\sigma^0)^*(\xi)=\sup_{x\in \mathbb R} x \xi -\sigma^0(x) = \sup_{x \leq c}  x\xi =c \xi+ \sup_{x\leq 0} x\xi= \begin{cases}
 c\xi & \xi \geq 0 \\
\infty & \xi <0.
 \end{cases}
$$
Since $\sigma^1 =\sigma$ is also increasing, we know that $(\sigma^1)^*(\xi)=\sigma^*(\xi) =\infty$ when $\xi <0$. Hence
$\psi^t$ can be written as
$$
\psi^t(x) =\sup_{\xi \in \mathbb R} x\xi-t\sigma^*(\xi)-(1-t) c\xi = t  \left(\sup_{\xi \in \mathbb R} \frac{x-(1-t)c}{t}\,\xi-\sigma^*(\xi) \right),
$$
and $\sigma^{**} =\sigma$ gives $\psi(t,x)=t\sigma\rbrac{\fr{x-c}{t}+c}$. This
is precisely \eqref{4}. Note that \eqref{eq:new1} also gives a weak geodesic since
$$
\sup_{x\in \mathbb R}  x\xi -\sigma(x-t) = t \xi +\sigma^*(\xi)
$$
is affine in $t$.

\end{document}